\numberwithin{equation}{section}
\newcommand{\lj}{[\![}
\newcommand{\rj}{]\!]}
\newcommand{\blj}{\Big[\!\!\Big[}
\newcommand{\brj}{\Big]\!\!\Big]}
\newcommand{\intv}{\tilde{v}}
\newcommand{\ext}{\mathcal{E}}
 \newcommand{\exth}{\mathcal{E}_{h}}
\newcommand\osc{{\rm osc}}
\newtheorem{assumption}{Assumption}[section]
\newtheorem{lemma}{Lemma}[section]
\newtheorem{theorem}{Theorem}[section]
\title[ERROR ANALYSIS OF NITSCHE'S METHOD FOR ROBIN BOUNDARY CONDITIONS]{AN IMPROVED A PRIORI ERROR ANALYSIS OF NITSCHE'S METHOD FOR ROBIN BOUNDARY CONDITIONS}
\author{Nora L\"uthen}\address{Institute of
Mathematics and Systems Analysis, Aalto University--School of Science, P.O. Box 11100,
00076 Aalto, Finland}\email{luethen.n@online.de}
\author{Mika Juntunen}\address{Institute of
Mathematics and Systems Analysis, Aalto University--School of Science, P.O. Box 11100,
00076 Aalto, Finland}\email{mika.juntunen@kone.com}
\author{Rolf Stenberg}\address{Institute of
Mathematics and Systems Analysis, Aalto University--School of Science, P.O. Box 11100,
00076 Aalto, Finland}\email{rolf.stenberg@aalto.fi}
\begin{document}

\maketitle


\begin{abstract} In a previous paper \cite{just} we have extended Nitsche's method \cite{nitsche} for the Poisson equation with general Robin boundary conditions. The analysis required that the solution is in $H^{s}$, with $s>3/2$. Here we give an improved error analysis using a technique proposed by Gudi \cite{gudi}.
 \end{abstract}

 
\section{The method and its consistency}
\label{s:deriving}

In the article \cite{just} a Nitsche-type method is introduced and analyzed for the following model Poisson problem with general Robin boundary conditions:
 Find $u\in H^{1}(\Omega) $ such that 
\begin{align}
-\Delta u & = f \quad \text{in } \Omega, \label{e:modelproblem}
\\
\frac{\partial u}{\partial n} & = \frac{1}{\epsilon}(u_0-u) + g
\quad \text{on } \Gamma, \label{e:modelbcond}
\end{align}
where $\Omega\subset \mathbb{R}^N,\, N=2,3,$ is a bounded domain with polygonal or polyhedral boundary $\Gamma$, $f \in
L^2(\Omega)$, $u_0 \in H^{1/2}(\Gamma)$, $g \in L^2(\Gamma)$, and
$\epsilon \in \mathbb{R}$, $0\leq \epsilon\leq \infty$. The limiting
values of the  parameter $\epsilon$ give  the Dirichlet and
Neumann problems, respectively.

The error analysis presented was not entirely satisfactory. It assumed that the solution is in $H^{s}(\Omega)$ with $s>3/2$, 
which is the same condition that traditionally has been needed for discontinuous Galerkin methods \cite{ern}. 
For discontinuous Galerkin methods Gudi introduced a technique using a posteriori error analysis by which this assumption could be avoided \cite {gudi}. 

The purpose of this paper is to use these arguments to improve the analysis of the Nitsche method for the above Robin problem. 
Below we start by recalling the method of \cite{just}. We first recall the derivation of the method in a way that emphasizes the use of the residual, 
which will be crucial for the error analysis. The same notation as in \cite{just} will be used.
The finite element partitioning into simplexes is denoted by 
 $\mathcal{T}_h$. This induces a mesh, denoted by
$\mathcal{G}_h$,  on the boundary $\Gamma$. By $K \in \mathcal{T}_h$
we denote an element of the mesh and by $E$ we denote an edge or
 a face in $\mathcal{G}_h$.  By $h_K$ we denote the diameter of the
element $K \in \mathcal{T}_h$, and by $\rho_{K}$ the radius of the biggest ball contained in $K$. The mesh is assumed to be regular, i.e. it holds
\begin{equation}
\sup_{K\in \mathcal{T}_h}\frac{h_{K}}{\rho_{K}}=\kappa < \infty.
\end{equation}
By $h_E$ we denote the diameter of
$E \in \mathcal{G}_h$. The finite element subspace is denoted by
\[
V_h :=  \ \{ v \in H^1(\Omega) \, : \, v\vert _{K} \in \mathcal{P}_p(K)
\ \forall K \in \mathcal{T}_h\, \},
\]
where $\mathcal{P}_p(K)$ is the space of polynomials of degree $p$.
 
 The Nitsche method is obtained as follows.
Multiplying the differential equation \eqref{e:modelproblem} with a testfunction $w\in V_{h}$ and integrating by parts we have
\begin{equation}\label{I}
 \big(\nabla u, \nabla w\big)_{\Omega}-\big<\frac{\partial u}{\partial n}, w\big>_{\Gamma}- \big(f, w\big)_{\Omega}=0.
\end{equation}

 Defining the  residual  
\begin{equation}
\label{residual}
R_{\Gamma} (v) = \epsilon (\frac{\partial v}{\partial
n}-g )+v-u_{0 } ,
\end{equation}
the boundary condition is
\begin{equation}
R_{\Gamma}(u)=0.\end{equation}
Hence it holds  
\begin{equation}
\label{e:consist2}
 \sum_{E \in \mathcal{G}_h} \frac{1}{\epsilon+\gamma h_E}  \big< R_{\Gamma}(u),w
\big>_E  
  =
0
\end{equation}
and 
\begin{equation}
\label{e:consist3}
 -\sum_{E \in \mathcal{G}_h} \frac{\gamma h_E}{\epsilon+\gamma h_E}
  \big< R_{\Gamma}(u),\frac{\partial w}{\partial n}\big>_E=0.
\end{equation}

 Adding \eqref{I}, \eqref{e:consist2} and \eqref{e:consist3} shows that the exact solution satisfies
 \begin{eqnarray}\label{A}
   \nonumber
  \big(\nabla u, \nabla w\big)_{\Omega}-\big<\frac{\partial u}{\partial n}, w\big>_{\Gamma}- \big(f, w\big)_{\Omega}&+&
 \sum_{E \in \mathcal{G}_h} \frac{1}{\epsilon+\gamma h_E}  \big< R_{\Gamma}(u),w
\big>_E  
\\&
-&\sum_{E \in \mathcal{G}_h} \frac{\gamma h_E}{\epsilon+\gamma h_E}
  \big< R_{\Gamma}(u),\frac{\partial w}{\partial n}\big>_E=0.
 \end{eqnarray}
 Substituting the expression \eqref{residual} for the boundary condition and rearranging the terms, we see that the exact solution satis\-fies 
\begin{equation}\label{AA}
\mathcal{B}_h(u,w)-\mathcal{F}_h(w) =0 \quad \forall w \in V_h
\end{equation}
where
\begin{multline}\
\mathcal{B}_h(v,w) = \ \big(\nabla v, \nabla w\big)_{\Omega}
+ \sum_{E \in \mathcal{G}_h}
\Bigg\{
-\frac{\gamma h_E}{\epsilon+\gamma h_E}
\Big[
\ \big<\frac{\partial v}{\partial n},w\big>_E
+\big<v,\frac{\partial w}{\partial n}\big>_E
\Big] \\
\ + \frac{1}{\epsilon + \gamma h_E} \, \big<v,w\big>_E
- \frac{\epsilon \gamma h_E}{\epsilon + \gamma h_E}\,
\big<\frac{\partial v}{\partial n},
\frac{\partial w}{\partial n}\big>_E
\Bigg\}
\end{multline}
and
\begin{multline}\label{B}
\mathcal{F}_h(w) = \ \big(f,w\big)_\Omega + \sum_{E \in
\mathcal{G}_h}
\Bigg\{
\frac{1}{\epsilon+\gamma h_E} \big<u_0,w\big>_E
-\frac{\gamma h_E}{\epsilon + \gamma h_E}
\big<u_0,\frac{\partial w}{\partial n} \big>_E
\\
 \ +\frac{\epsilon}{\epsilon+\gamma h_E} \big<g,w\big>_E
-\frac{\epsilon \gamma h_E}{\epsilon+\gamma h_E}
\big<g,\frac{\partial w}{\partial n}\big>_E
\Bigg\} .
\end{multline}

The above derivation shows the consistency of the \medskip
\noindent

{\bf Nitsche Method \cite{just}.} {\it Find $u_h \in V_h$ such that}
\begin{equation}
\label{e:method}
\mathcal{B}_h(u_h,w) = \mathcal{F}_h(w) \quad \forall w \in V_h.
\end{equation}

\section{The new a priori error estimate}
  The estimate will be given in the mesh and problem dependent norm  
\begin{equation}\label{normdef}
\| v \|_{h}^2  :=  \| \nabla v \|_{0,\Omega}^2
+\sum_{E \in \mathcal{G}_h}
\frac{1}{\epsilon+h_E} \| v \|_{0,E}^2 .
\end{equation}
 
 We recall the following discrete trace inequality which is easily proved by scaling arguments.
\begin{lemma}
There is a positive constant $C_I$ such that
\begin{equation}
\label{e:CI}
\sum_{E \in \mathcal{G}_h} h_E
\left\| \frac{\partial v}{\partial n} \right\|_{0,E}^2
\le C_I \| \nabla v \|_{0,\Omega}^2 \quad \forall v \in V_h .
\end{equation}
\end{lemma}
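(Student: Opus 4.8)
The plan is to reduce the inequality to a finite-dimensional norm equivalence on a reference element via the standard scaling argument. The point to exploit is that $\mathcal{G}_h$ is the trace of $\mathcal{T}_h$ on $\Gamma$, so every boundary face $E \in \mathcal{G}_h$ is a face of a \emph{unique} element $K = K_E \in \mathcal{T}_h$, and that for $v \in V_h$ the restriction $v|_K$, hence the vector field $x \mapsto \nabla v(x)$, is a (vector) polynomial of degree $\le p-1$ on $K$.

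First I would fix such a pair $E \subset \partial K$ and let $F_K(\hat x) = B_K\hat x + b_K$ be the affine map from the reference simplex $\hat K$ onto $K$, with $\hat E := F_K^{-1}(E)$. On $\hat K$ the assignment
\[
\hat q \;\longmapsto\; \| \hat q \|_{0,\hat E}
\]
is a seminorm on the finite-dimensional space $[\mathcal{P}_{p-1}(\hat K)]^N$, hence is dominated by the norm $\hat q \mapsto \| \hat q \|_{0,\hat K}$; this produces a constant $\hat C = \hat C(p,N)$ with $\| \hat q \|_{0,\hat E}^2 \le \hat C \| \hat q \|_{0,\hat K}^2$ for all such $\hat q$. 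Applying this to $\hat q = \widehat{\nabla}(v\circ F_K)$, together with the pointwise bound $|\partial v/\partial n| \le |\nabla v|$ on $E$, gives the estimate at the reference level.

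Next I would transfer the inequality back to $K$ by the usual change of variables: $\widehat{\nabla}(v\circ F_K) = B_K^{\top}(\nabla v)\circ F_K$, the volume scaling $dx = |\det B_K|\,d\hat x$, and the surface scaling $ds = |\det B_K|\,|B_K^{-\top}\hat n|\,d\hat s$, combined with the regularity bounds $\|B_K\| \le C h_K$ and $\|B_K^{-1}\| \le C h_K^{-1}$ which follow from $h_K/\rho_K \le \kappa$. Collecting the powers of $\|B_K\|$, $\|B_K^{-1}\|$ and $|\det B_K|$, and using $h_E \le h_K$, one obtains
\[
h_E \Big\| \frac{\partial v}{\partial n} \Big\|_{0,E}^2 \;\le\; C\,(p,N,\kappa)\, \| \nabla v \|_{0,K}^2 .
\]
Summing over $E \in \mathcal{G}_h$, and noting that each $E$ has a unique parent element $K_E$ and each simplex has at most $N+1$ faces, yields
\[
\sum_{E \in \mathcal{G}_h} h_E \Big\| \frac{\partial v}{\partial n}\Big\|_{0,E}^2 \;\le\; C\sum_{E\in\mathcal{G}_h} \| \nabla v \|_{0,K_E}^2 \;\le\; (N+1)\,C\,\| \nabla v \|_{0,\Omega}^2 ,
\]
which is the claim with $C_I := (N+1)\,C$. (Alternatively, one may simply invoke the textbook scaled trace inequality $\|q\|_{0,E}^2 \le C h_K^{-1}\|q\|_{0,K}^2$ for $q \in \mathcal{P}_{p-1}(K)$ on shape-regular meshes, applied componentwise to $\nabla v$.)

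The only delicate point is the bookkeeping in the scaling step: since an affine map is not conformal, $\partial v/\partial n$ does not transform as cleanly as $v$ itself, so it must be controlled through the full gradient, and one must verify that the accumulated powers of $\|B_K\|$ and $\|B_K^{-1}\|$ combine into a bounded power of $h_K/\rho_K$ — which is precisely where shape regularity enters and closes the argument. Everything else is routine.
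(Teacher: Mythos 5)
Your argument is correct and is precisely the scaling argument the paper alludes to when it says the inequality "is easily proved by scaling arguments": pointwise bound $|\partial v/\partial n|\le|\nabla v|$, finite-dimensional norm equivalence on the reference simplex, affine change of variables with shape regularity controlling $\|B_K\|\,\|B_K^{-1}\|$, and summation using that each boundary face has a unique parent element. Nothing is missing; the bookkeeping you flag as the only delicate point is handled correctly.
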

For the formulation we have the following stability result, cf. \cite{just}. Here and
in what follows $C$ denotes a generic positive constant independent
of both the mesh parameter $h$ and the parameter $\epsilon$.
\begin{lemma}
\label{l:stability} Suppose that $  0< \gamma <1/C_I$. Then there
exists a positive constant $C$ such that
\begin{equation}
\mathcal{B}_h(v,v) \ge C \| v \|_h^2 \quad \forall v \in V_h.
\end{equation}
\end{lemma}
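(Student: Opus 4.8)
The plan is to prove the coercivity estimate directly, by expanding $\mathcal{B}_h(v,v)$, controlling the sign‑indefinite boundary cross terms with a weighted Young inequality whose parameter scales like $h_E$, and then absorbing the resulting normal‑derivative terms into $\|\nabla v\|_{0,\Omega}^2$ via the discrete trace inequality \eqref{e:CI}. Since $\mathcal{B}_h$ is symmetric, for $v\in V_h$ one has
\begin{equation*}
\mathcal{B}_h(v,v) = \|\nabla v\|_{0,\Omega}^2 + \sum_{E\in\mathcal{G}_h}\left\{ -\frac{2\gamma h_E}{\epsilon+\gamma h_E}\Big\langle \frac{\partial v}{\partial n},v\Big\rangle_E + \frac{1}{\epsilon+\gamma h_E}\|v\|_{0,E}^2 - \frac{\epsilon\gamma h_E}{\epsilon+\gamma h_E}\Big\|\frac{\partial v}{\partial n}\Big\|_{0,E}^2 \right\}.
\end{equation*}
Here it is essential to observe that the bracketed boundary contribution is \emph{not} nonnegative: in the Dirichlet limit $\epsilon\to0$ it reduces to $-2\langle \partial v/\partial n,v\rangle_E + (\gamma h_E)^{-1}\|v\|_{0,E}^2$, which is unbounded below over $V_h$. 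Hence one cannot argue edge by edge, and the global term $\|\nabla v\|_{0,\Omega}^2$ must enter the estimate.

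First I would bound the cross term on each edge by the Cauchy--Schwarz and Young inequalities with a parameter proportional to $h_E$: for any fixed $\theta\in(0,1)$,
\begin{equation*}
\frac{2\gamma h_E}{\epsilon+\gamma h_E}\Big|\Big\langle \frac{\partial v}{\partial n},v\Big\rangle_E\Big| \le \frac{\gamma h_E}{\epsilon+\gamma h_E}\left(\frac{\gamma h_E}{1-\theta}\Big\|\frac{\partial v}{\partial n}\Big\|_{0,E}^2 + \frac{1-\theta}{\gamma h_E}\|v\|_{0,E}^2\right).
\end{equation*}
The second summand is exactly $(1-\theta)(\epsilon+\gamma h_E)^{-1}\|v\|_{0,E}^2$, so it consumes only a fraction $1-\theta$ of the boundary $L^2$-term. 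Combining the first summand with the existing $-\epsilon\gamma h_E(\epsilon+\gamma h_E)^{-1}\|\partial v/\partial n\|_{0,E}^2$ term, the coefficient of $\|\partial v/\partial n\|_{0,E}^2$ is $\frac{\gamma h_E}{\epsilon+\gamma h_E}\big(\frac{\gamma h_E}{1-\theta}+\epsilon\big)$, which is bounded by $\frac{\gamma h_E}{1-\theta}$ uniformly in $\epsilon\in[0,\infty]$ (a short check handles the limits $\epsilon=0$, where equality holds, and $\epsilon=\infty$). Collecting the terms gives
\begin{equation*}
\mathcal{B}_h(v,v) \ge \|\nabla v\|_{0,\Omega}^2 - \frac{\gamma}{1-\theta}\sum_{E\in\mathcal{G}_h} h_E\Big\|\frac{\partial v}{\partial n}\Big\|_{0,E}^2 + \theta\sum_{E\in\mathcal{G}_h}\frac{1}{\epsilon+\gamma h_E}\|v\|_{0,E}^2.
\end{equation*}

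Next I would apply the discrete trace inequality \eqref{e:CI} to the middle sum, obtaining $\mathcal{B}_h(v,v)\ge \big(1-\tfrac{\gamma C_I}{1-\theta}\big)\|\nabla v\|_{0,\Omega}^2 + \theta\sum_{E}(\epsilon+\gamma h_E)^{-1}\|v\|_{0,E}^2$. This is where the hypothesis $\gamma<1/C_I$ is used: since $\gamma C_I<1$ we may fix $\theta\in(0,1-\gamma C_I)$, and then both constants $1-\tfrac{\gamma C_I}{1-\theta}$ and $\theta$ are strictly positive and independent of $h$ and $\epsilon$. Finally, from $\epsilon+\gamma h_E\le(1+\gamma)(\epsilon+h_E)$ one has $(\epsilon+\gamma h_E)^{-1}\ge (1+\gamma)^{-1}(\epsilon+h_E)^{-1}$, so $\sum_E(\epsilon+\gamma h_E)^{-1}\|v\|_{0,E}^2$ is bounded below by a multiple of the boundary part of $\|v\|_h^2$; recalling the definition \eqref{normdef} yields $\mathcal{B}_h(v,v)\ge C\|v\|_h^2$ with $C$ depending only on $\gamma$ and $C_I$.

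The main obstacle is really the only delicate point: one must recognize that the Young parameter has to scale like $h_E$, so that the boundary $L^2$-term is reproduced with the correct weight $(\epsilon+\gamma h_E)^{-1}$, while simultaneously keeping the coefficient of $\|\partial v/\partial n\|_{0,E}^2$ uniformly bounded in $\epsilon$, including the singular limits $\epsilon=0$ (Dirichlet) and $\epsilon=\infty$ (Neumann). Once this weighting is chosen correctly, the constraint $\gamma C_I<1$ is precisely what guarantees a strictly positive remainder on $\|\nabla v\|_{0,\Omega}^2$ after the discrete trace inequality is invoked, and the rest is bookkeeping.
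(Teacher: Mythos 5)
Your proof is correct and follows essentially the same route as the paper's (which is only cited there from \cite{just}): expand $\mathcal{B}_h(v,v)$, bound the cross term by Cauchy--Schwarz and a Young inequality with an $h_E$-scaled parameter so that only a fraction of the boundary $L^2$-term is consumed, absorb the normal-derivative contributions into $\|\nabla v\|_{0,\Omega}^2$ via the discrete trace inequality \eqref{e:CI}, and use $\gamma C_I<1$ to choose the free parameter. Your $\theta$ plays exactly the role of the paper's Young parameter $\delta$ (with $\theta \approx 1-\delta$), and the uniformity in $\epsilon\in[0,\infty]$ is handled the same way.
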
 

By $f_{h}\in V_{h}$ and $g_{h}, \, u_{0,h}\in V_{h} \vert _\Gamma$  we denote the interpolants to the data. 
For $E\in \mathcal{G}_{h}$ we denote by $K(E)\in \mathcal{T}_{h}$ the element with $E$ as edge/face.  
In \cite{just} we proved the following bound. 

\begin{lemma} \label{lemma23}For an arbitrary $v\in V_{h}$ and $E \in \mathcal{G}_{h}$ it holds
\begin{multline}
\label{e:rgok} \frac{h_E^{1/2}}{\epsilon +h_E} \| R_{\Gamma}(v)
\|_{0,E}
 \le C \Big(
\| \nabla(u-v )\|_{0,K(E)} +h_K \| f-f_h \|_{0,K(E)} \\
+\frac{1}{(\epsilon +h_E)^{1/2}} \| u-v \|_{0,E}
+\frac{h_E^{1/2}}{\epsilon +h_E} \| \epsilon (g-g_h) +u_0-u_{0,h}
\|_{0,E} \Big).
\end{multline}
\end{lemma}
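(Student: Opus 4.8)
The plan is to estimate $R_\Gamma(v)$ on a single edge $E$ by inserting the exact solution $u$, using the boundary condition $R_\Gamma(u)=0$, and then handling the resulting interior/boundary terms with scaled trace inequalities. First I would write
$R_\Gamma(v) = R_\Gamma(v) - R_\Gamma(u) = \epsilon\big(\tfrac{\partial (v-u)}{\partial n}\big) + (v-u)$ on $E$, but this is not quite enough because the normal-derivative term cannot be controlled directly in the norm on the right-hand side of \eqref{e:rgok}; instead I would keep $\partial u/\partial n$ and use the differential equation. So the cleaner route is: split $R_\Gamma(v)=\epsilon(\tfrac{\partial v}{\partial n}-g)+v-u_0$ and compare with the data interpolants, writing
\[
R_\Gamma(v)=\epsilon\Big(\tfrac{\partial v}{\partial n}-g_h\Big)+v-u_{0,h}+\epsilon(g_h-g)+(u_{0,h}-u_0).
\]
The last two terms are exactly the data oscillation appearing in the final term of \eqref{e:rgok} after multiplying by $h_E^{1/2}/(\epsilon+h_E)$, so they need no further work.

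For the remaining quantity $\phi:=\epsilon(\tfrac{\partial v}{\partial n}-g_h)+v-u_{0,h}$, which is a polynomial on $K(E)$ restricted to $E$, I would use a scaling/inverse estimate to pass from the $L^2(E)$ norm to an $L^2(K(E))$ norm, and then test against a suitable bubble-type function supported on $K(E)$ (or simply use that on the finite-dimensional polynomial space all norms are equivalent up to the scaling powers of $h_K$). The key identity is the element-wise Galerkin-type relation one gets from \eqref{A}/\eqref{AA} with a localized test function $w$ supported on $K(E)$: integrating $-\Delta u=f$ by parts against such $w$ and using that $-\Delta v$ is a polynomial, one obtains
\[
(\nabla(u-v),\nabla w)_{K(E)} + \big\langle \tfrac{\partial(v-u)}{\partial n}, w\big\rangle_E = (f-f_h,w)_{K(E)} + (f_h + \Delta v, w)_{K(E)},
\]
and the volume residual $f_h+\Delta v$ is itself bounded (via the bubble-function argument of Verfürth, or the bound already used in \cite{just}) by $C\,h_K^{-1}\|\nabla(u-v)\|_{0,K(E)} + C\|f-f_h\|_{0,K(E)}$. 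Combining this with the boundary contribution gives control of $h_E^{1/2}\|\tfrac{\partial(v-u)}{\partial n}\|_{0,E}$, hence of the $\epsilon\,\partial v/\partial n$ part of $\phi$ after recalling $\tfrac{\partial u}{\partial n}=g$ on $\Gamma$; the $v-u_{0,h}=(v-u)+(u-u_{0,h})$ part is split into $\|v-u\|_{0,E}$, which feeds the third term of \eqref{e:rgok}, and a data term.

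The main obstacle is bookkeeping the $\epsilon$-dependence uniformly for all $0\le\epsilon\le\infty$: the prefactor $h_E^{1/2}/(\epsilon+h_E)$ must absorb both the $O(1)$ coefficient in front of $v-u_{0,h}$ and the $O(\epsilon)$ coefficient in front of $\partial v/\partial n - g_h$, so one needs the elementary bounds $\tfrac{h_E^{1/2}}{\epsilon+h_E}\le h_E^{-1/2}$ and $\tfrac{\epsilon\,h_E^{1/2}}{\epsilon+h_E}\le h_E^{1/2}$, and then the trace/scaling estimates must come out with exactly matching powers of $h_K$ (using $h_E\simeq h_K$ for $E\subset\partial K(E)$ by mesh regularity). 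Once the $\epsilon$-weights are tracked carefully, the remaining steps are the standard residual-estimation arguments, and assembling them yields \eqref{e:rgok}.
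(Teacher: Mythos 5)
Your overall architecture matches the argument of \cite{just} on which the paper relies for this lemma: peel off the data oscillation by the triangle inequality, reduce to the purely discrete residual $R_{\Gamma,\mathrm{red}}=\epsilon(\partial v/\partial n-g_h)+v-u_{0,h}$, test it against an edge-bubble extension, and convert the normal-derivative pairing into volume terms by the element-wise Green's identity together with the standard bound on the volume residual $f+\Delta v$. Up to that point the plan is sound, and your Green's identity is the right one.

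The genuine gap is the step ``combining this with the boundary contribution gives control of $h_E^{1/2}\|\partial(v-u)/\partial n\|_{0,E}$.'' Under the hypotheses of this paper the solution only satisfies $u\in H^1(\Omega)$ with $\Delta u\in L^2(\Omega)$, so $\partial u/\partial n$ restricted to $E$ lives in $H^{-1/2}(\partial K)$ and is in general not in $L^2(E)$: the intermediate quantity you propose to control is typically infinite. Obtaining an $L^2(E)$ bound on $\partial_n u$ is exactly what forces the $H^{s}$, $s>3/2$, assumption that this paper is written to remove. The correct move --- and the one the cited proof makes --- is never to isolate that norm: test $R_{\Gamma,\mathrm{red}}$ against the concrete function $\hat w=\Psi_E\mathcal{E}R_{\Gamma,\mathrm{red}}$, so that $\partial_n(v-u)$ occurs only in the single pairing $\epsilon\langle\partial_n(v-u),\hat w\rangle_E$, replace that pairing by $\epsilon\big[(f+\Delta v,\hat w)_{K(E)}-(\nabla(u-v),\nabla\hat w)_{K(E)}\big]$, and then use $\|\hat w\|_{0,K(E)}\le Ch_K^{1/2}\|R_{\Gamma,\mathrm{red}}\|_{0,E}$, $\|\nabla\hat w\|_{0,K(E)}\le Ch_K^{-1/2}\|R_{\Gamma,\mathrm{red}}\|_{0,E}$ before dividing by $\|R_{\Gamma,\mathrm{red}}\|_{0,E}$. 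A second, smaller slip: the boundary condition \eqref{e:modelbcond} is a Robin condition, so neither ``$\partial u/\partial n=g$ on $\Gamma$'' nor ``$u-u_{0,h}$ is a data term'' is true separately; the two leftover pieces $\epsilon(\partial_n u-g)$ and $u-u_0$ must be combined through $R_\Gamma(u)=0$, where they cancel. As written, your decomposition would leave the uncontrolled term $u-u_0=-\epsilon(\partial_n u-g)$ on the right-hand side.
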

We introduce the oscillation terms
\begin{eqnarray}\label{oscf}
\osc(f)&=& \Big(\sum_{ K\in \mathcal{T}_h} h_{K}^{2} \| f-f_h \|_{0,K}^{2}\Big)^{1/2},
\\
\label{oscg}
\osc(\epsilon,u_0,g)&=&\Big( \sum_{E \in \mathcal{G}_h} 
\frac{h_E }{(\epsilon +h_E)^{2}} \| \epsilon (g-g_h) +u_0-u_{0,h}
\|_{0,E} ^{2}\Big)^{1/2}.
\end{eqnarray}
Lemma \ref{lemma23} then gives
\begin{lemma}
\label{resest} 
For $v\in V_{h}$ it holds
\begin{equation}
\Big(\sum_{E\in \mathcal{G}_{h}}
\frac{h_E}{(\epsilon +h_E)^{2}} \| R_{\Gamma}(v)
 \|_{0,E}^{2}\Big)^{1/2}
 \leq C \big\{  \| u-v \|_h+\osc(f)+\osc(\epsilon,u_{0},g)\big\}.
\end{equation}
\end{lemma}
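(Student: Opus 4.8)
The plan is to obtain the estimate directly from Lemma~\ref{lemma23} by squaring the edgewise bound \eqref{e:rgok}, summing over the boundary mesh $\mathcal{G}_h$, and identifying the resulting sums. Note first that squaring the left-hand side of \eqref{e:rgok} produces exactly the summand $\frac{h_E}{(\epsilon+h_E)^{2}}\|R_\Gamma(v)\|_{0,E}^{2}$ that appears in the statement. Hence, applying the elementary inequality $(a+b+c+d)^{2}\le 4(a^{2}+b^{2}+c^{2}+d^{2})$ to the four terms on the right of \eqref{e:rgok} and summing over all $E\in\mathcal{G}_h$, the claim is reduced to bounding four separate sums.

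Two of these are immediate. By the definition \eqref{normdef} of $\|\cdot\|_h$, the sum of $\frac{1}{\epsilon+h_E}\|u-v\|_{0,E}^{2}$ is bounded by $\|u-v\|_h^{2}$; and the sum of $\frac{h_E}{(\epsilon+h_E)^{2}}\|\epsilon(g-g_h)+u_0-u_{0,h}\|_{0,E}^{2}$ is, by \eqref{oscg}, exactly $\osc(\epsilon,u_0,g)^{2}$. For the remaining two sums, which involve the element quantities $\|\nabla(u-v)\|_{0,K(E)}$ and $h_{K}\|f-f_h\|_{0,K(E)}$ on the element $K(E)$ adjacent to $E$, I would use the finite-overlap property of the mesh: since $\mathcal{T}_h$ consists of simplices, each element has at most $N+1$ faces and therefore occurs as $K(E)$ for at most $N+1$ boundary faces $E$. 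Consequently $\sum_{E}\|\nabla(u-v)\|_{0,K(E)}^{2}\le (N+1)\|\nabla(u-v)\|_{0,\Omega}^{2}\le (N+1)\|u-v\|_h^{2}$, and likewise $\sum_{E}h_{K(E)}^{2}\|f-f_h\|_{0,K(E)}^{2}\le (N+1)\osc(f)^{2}$ by \eqref{oscf}. Combining the four bounds and taking square roots gives the asserted inequality.

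There is no serious obstacle here; the estimate is essentially a routine summation once Lemma~\ref{lemma23} is available. The only points to watch are that the constant from Lemma~\ref{lemma23}, and hence the final $C$, is independent of $h$ and of $\epsilon$ (which is already part of that lemma) and depends only on the shape-regularity constant $\kappa$ and on $N$; and, if one wishes to match the element-indexed quantity $h_K$ in \eqref{oscf} with the edge-indexed bookkeeping in \eqref{e:rgok}, one uses shape-regularity to note $h_{K(E)}\simeq h_E$ so that the two indexings are interchangeable up to a constant.
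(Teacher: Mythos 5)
Your argument is correct and is exactly the routine summation the paper intends: the paper gives no explicit proof of Lemma~\ref{resest} beyond the words ``Lemma~\ref{lemma23} then gives,'' and squaring \eqref{e:rgok}, summing over $E\in\mathcal{G}_h$, and invoking the finite overlap of the elements $K(E)$ together with the definitions \eqref{normdef}, \eqref{oscf}, \eqref{oscg} is precisely the intended reasoning. No issues.
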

 We can now prove our new error estimate.
\begin{theorem} Suppose that $0<\gamma <1/C_{I}.$ Then there exist a positive constant C such that
\begin{equation}
\label{e:ap2}
\| u-u_h \|_h
 \leq C \big\{\inf_{v\in V_{h}}   \| u-v \|_h+\osc(f)+\osc(\epsilon,u_{0},g)\big\}.
\end{equation}
\end{theorem}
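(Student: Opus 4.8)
The plan is to follow the now-standard Gudi-type "medius" analysis, combining the discrete coercivity of Lemma~\ref{l:stability} with the residual bound of Lemma~\ref{resest}, and crucially \emph{avoiding} any use of the continuous bilinear form $\mathcal{B}_h(u,w)$ on the exact solution (which would need $u\in H^s$, $s>3/2$, for the normal traces on edges to make sense). Let $v\in V_h$ be arbitrary and write $u_h-v\in V_h$. By Lemma~\ref{l:stability},
\[
C\,\|u_h-v\|_h^2 \le \mathcal{B}_h(u_h-v,u_h-v)
= \mathcal{B}_h(u_h,u_h-v) - \mathcal{B}_h(v,u_h-v).
\]
For the first term I use the discrete equation \eqref{e:method}: $\mathcal{B}_h(u_h,u_h-v)=\mathcal{F}_h(u_h-v)$. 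So with $w:=u_h-v\in V_h$,
\[
C\,\|u_h-v\|_h^2 \le \mathcal{F}_h(w) - \mathcal{B}_h(v,w).
\]

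The key step is to rewrite $\mathcal{F}_h(w)-\mathcal{B}_h(v,w)$ entirely in terms of quantities that are well defined for $v\in V_h$ and $u\in H^1$. To do this I integrate by parts \emph{elementwise} on $\mathcal{F}_h(w)=(f,w)_\Omega+\dots$: since $-\Delta u=f$ in $L^2(\Omega)$, we have $(f,w)_\Omega=\sum_K (\nabla u,\nabla w)_K - \sum_E\langle \partial u/\partial n, w\rangle_E$ is \emph{not} directly usable (again the trace issue), so instead I keep $(f,w)_\Omega$ as is and simply collect terms. The clean way: substitute the definition \eqref{residual} of $R_\Gamma$ backwards. One checks by direct algebra (reversing the derivation in Section~\ref{s:deriving}) that
\[
\mathcal{F}_h(w)-\mathcal{B}_h(v,w)
= (f,w)_\Omega - (\nabla v,\nabla w)_\Omega
+ \sum_{E\in\mathcal{G}_h}\Big\{ \frac{\gamma h_E}{\epsilon+\gamma h_E}\big\langle \tfrac{\partial w}{\partial n},R_\Gamma(v)\big\rangle_E - \frac{1}{\epsilon+\gamma h_E}\big\langle w, R_\Gamma(v)\big\rangle_E\Big\} + (\text{bdry data terms}),
\]
and then, using $(f,w)_\Omega = (\nabla u,\nabla w)_\Omega + \text{(jump/residual terms that vanish because $w\in H^1$ and $u$ solves the PDE)}$ handled via the $f_h$ oscillation — more precisely, inserting $f_h$ and using $(\nabla(u-v),\nabla w)_\Omega$ plus an elementwise bubble-function / $L^2$-projection argument à la Gudi to bound $\|f-f_h\|$ against $\osc(f)$ — the whole right-hand side is controlled by
\[
\Big( \|u-v\|_h + \osc(f) + \osc(\epsilon,u_0,g) + \Big(\sum_E \tfrac{h_E}{(\epsilon+h_E)^2}\|R_\Gamma(v)\|_{0,E}^2\Big)^{1/2}\Big)\cdot \|w\|_h,
\]
where the factor $\|w\|_h$ on the $\partial w/\partial n$ terms comes from the discrete trace inequality \eqref{e:CI} (this is why $\gamma<1/C_I$ is needed, consistently with Lemma~\ref{l:stability}).

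Then I invoke Lemma~\ref{resest} to replace the residual sum by $C\{\|u-v\|_h+\osc(f)+\osc(\epsilon,u_0,g)\}$, divide by $\|w\|_h=\|u_h-v\|_h$, and obtain
\[
\|u_h-v\|_h \le C\big\{\|u-v\|_h + \osc(f) + \osc(\epsilon,u_0,g)\big\}.
\]
Finally the triangle inequality $\|u-u_h\|_h\le \|u-v\|_h+\|v-u_h\|_h$ gives \eqref{e:ap2} after taking the infimum over $v\in V_h$. I expect the main obstacle to be the bookkeeping in the identity for $\mathcal{F}_h(w)-\mathcal{B}_h(v,w)$: one must carefully reassemble exactly the combination $\langle R_\Gamma(v),\cdot\rangle_E$ with the right weights $\tfrac{1}{\epsilon+\gamma h_E}$ and $\tfrac{\gamma h_E}{\epsilon+\gamma h_E}$, and handle the interior term $(f,w)_\Omega-(\nabla v,\nabla w)_\Omega$ by the Gudi trick (elementwise, using that $w$ is a legitimate test function and only $L^2$ data regularity is available) so that no trace of $u$ on edges is ever taken. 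Everything else is Cauchy--Schwarz together with the two trace-type lemmas already at hand.
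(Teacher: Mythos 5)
Your opening and closing are exactly the paper's: coercivity of $\mathcal{B}_h$ on $V_h$, the discrete equation \eqref{e:method}, rewriting $\mathcal{B}_h(v,w)-\mathcal{F}_h(w)$ in terms of the residual $R_\Gamma(v)$, Lemma \ref{resest} plus the discrete trace inequality for the $\langle R_\Gamma(v),\partial w/\partial n\rangle_E$ terms, and the triangle inequality at the end. But there is a genuine gap in the middle, and it is precisely where the whole difficulty of the theorem lives. After using $-\Delta u=f$ and the Robin condition in weak form, the identity \eqref{split} produces the term
\begin{equation*}
R_2=-\big\langle \tfrac{\partial v}{\partial n}-\tfrac{1}{\epsilon}(u_0-u)-g,\,w\big\rangle_{\Gamma},
\end{equation*}
which carries a factor $1/\epsilon$, together with $R_3=\sum_E\tfrac{1}{\epsilon+\gamma h_E}\langle R_\Gamma(v),w\rangle_E$, whose weight matches the one in Lemma \ref{resest} only up to a factor $\bigl((\epsilon+h_E)/h_E\bigr)^{1/2}$. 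Neither term can be bounded uniformly in $\epsilon\in[0,\infty]$ on its own; your proposal hides both inside ``(bdry data terms)'' and an asserted bound. The paper's proof spends Steps 3--5 exactly here: it splits $\Gamma$ into the edges with $\epsilon<h_E$ and those with $\epsilon\ge h_E$. On the coarse part it rewrites $R_2$ as $\langle \partial_n(u-v),w\rangle$, builds a finite element extension $\mathcal{E}_h w$ of $w$ into a boundary strip $\Omega_\epsilon$ with the stability bound \eqref{trace}, integrates by parts elementwise to convert the (otherwise undefined) normal trace of $u$ into element residuals $f+\Delta v$, jumps of $\partial_n v$, and $(\nabla(u-v),\nabla\mathcal{E}_h w)$, and then corrects for the mismatch on $\Gamma_\epsilon^+\setminus\Gamma_\epsilon$. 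On the fine part it uses the exact algebraic cancellation $\tfrac{1}{\epsilon+\gamma h_E}-\tfrac{1}{\epsilon}=-\tfrac{\gamma h_E}{(\epsilon+\gamma h_E)\epsilon}$ between $R_2$ and $R_3$, which is what tames the $1/\epsilon$. This is the actual content of ``Gudi's trick'' in this setting; invoking it by name without the boundary splitting and the extension operator does not close the argument.

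Two smaller points. Your worry that the global integration by parts $(f,w)_\Omega=(\nabla u,\nabla w)_\Omega-\langle\partial_n u,w\rangle_\Gamma$ is ``not usable'' is misplaced: it is usable because the Robin condition identifies $\partial_n u$ with the $L^2(\Gamma)$ function $\tfrac1\epsilon(u_0-u)+g$, and the paper uses exactly this in \eqref{III}; what is \emph{not} available is an elementwise normal trace of $\nabla u$, which is why the extension operator is needed on $\Gamma_\epsilon$. Also, the hypothesis $\gamma<1/C_I$ is used only for the coercivity of Lemma \ref{l:stability}, not for bounding the $\partial w/\partial n$ terms, which need only the trace inequality \eqref{e:CI} itself.
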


\begin{proof} We will divide the proof in 6 steps.

\smallskip
\noindent 1. {\it Treating the  consistency   by Gudi's method.}

Let  $v\in V_{h} $ be arbitrary. From the stability we have
\begin{equation}
 C \|v -u_h\|_h^2 \leq  \mathcal{B}_h(v -u_h,v -u_h). 
\end{equation}
Next, we denote $w=v -u_h$ and use \eqref{e:method}
\begin{eqnarray}
 \mathcal{B}_h(v -u_h,v -u_h) &=& \mathcal{B}_h(v -u_h,w)
 = \mathcal{B}_h(v ,w)-\mathcal{B}_h(u_{h},w)
 \\
 &=& \mathcal{B}_h(v ,w)-\mathcal{F}_h(w) . \nonumber
\end{eqnarray}
Reversing the arguments leading from \eqref{A} to \eqref{AA} we see that
\begin{eqnarray}\nonumber \label{II}
   \mathcal{B}_h(v ,w)-\mathcal{F}_h(w)& =&  \big(\nabla v, \nabla w\big)_{\Omega}-\big<\frac{\partial v}{\partial n}, w\big>_{\Gamma}- \big(f, w\big)_{\Omega}\\&+&
 \sum_{E \in \mathcal{G}_h} \frac{1}{\epsilon+\gamma h_E}  \big< R_{\Gamma}(v),w
\big>_E  
\\&
-&\sum_{E \in \mathcal{G}_h} \frac{\gamma h_E}{\epsilon+\gamma h_E}
  \big< R_{\Gamma}(v),\frac{\partial w}{\partial n}\big>_E .
  \nonumber
 \end{eqnarray}
 Substituting the boundary condition \eqref{e:modelbcond} into \eqref{I} we get
 \begin{equation}\label{III}
 \big(\nabla u, \nabla w\big)_{\Omega}-\big<\frac{1}{\epsilon}(u_0-u) + g, w\big>_{\Gamma}- \big(f, w\big)_{\Omega}=0.
\end{equation}
Subtracting this from the right hand side of \eqref{II} yields
\begin{eqnarray}\nonumber \label{split}
   \mathcal{B}_h(v ,w)-\mathcal{F}_h(w)& =&  \big(\nabla (v-u), \nabla w\big)_{\Omega}
   -\big<\frac{\partial v}{\partial n}-\frac{1}{\epsilon}(u_0-u) - g, w\big>_{\Gamma} \\&+&
 \sum_{E \in \mathcal{G}_h} \frac{1}{\epsilon+\gamma h_E}  \big< R_{\Gamma}(v),w
\big>_E  
\nonumber
\\&
-&\sum_{E \in \mathcal{G}_h} \frac{\gamma h_E}{\epsilon+\gamma h_E}
  \big< R_{\Gamma}(v),\frac{\partial w}{\partial n}\big>_E 
  \\
  & =& R_{1}+R_{2}+R_{3}+R_{4}.
  \nonumber
 \end{eqnarray}
Next we estimate the terms in the right hand side above. 

\smallskip

\noindent2. {\it Estimates for the terms $R_{1}$ and $R_{4}$.}

The first and the last term are readily estimated. By Schwarz inequality and the definition  \eqref{normdef} of the norm, we have
\begin{equation}\label{r1}
R_{1}=\big(\nabla (v-u), \nabla w\big)_{\Omega}\leq \| u-v \|_h \| w \|_h.
\end{equation}
Schwarz inequality, the discrete trace inequality \eqref{e:CI}, and Lemma \ref{resest} give
\begin{align}
\label{r4}
\nonumber 
R_{4} & \leq\Big\vert \sum_{E \in \mathcal{G}_h} \frac{\gamma h_E}{\epsilon+\gamma h_E}
  \big< R_{\Gamma}(v),\frac{\partial w}{\partial n}\big>_E \Big\vert 
\\
& \leq
  \Big(\sum_{E \in \mathcal{G}_h}
   \frac{\gamma^{2} h_E}{(\epsilon+\gamma h_E)^{2}}
 \Vert R_{\Gamma}(v)\Vert_{0,E}^{2}\Big)^{1/2}
\Big( \sum_{E \in \mathcal{G}_h}   h_E
\Vert  \frac{\partial w}{\partial n}\Vert_{0,E}^{2}\Big)^{1/2}
\\
&\leq 
C \big(  \| u-v \|_h+\osc(f)+\osc(\epsilon,u_{0},g)\big)\Vert w\Vert_{h} .
\nonumber 
\end{align}

\noindent 3. {\it Splitting the boundary.}

To treat the two remaining terms $ R_{2}$ and $ R_{3}$, we have to separate the cases when the edge size $h_{E}$ is smaller or greater than $\epsilon$.  To this end we denote the collection of edges of size greater than $\epsilon $  by 
\begin{equation}
 \mathcal G_h^\epsilon = \{ E \in \mathcal G _h \mid \epsilon < h_E \},
 \end{equation}
 and the corresponding part of the boundary by
 \begin{equation}
 \Gamma_\epsilon  = \bigcup_{E \in \mathcal G_h^\epsilon} E.
\end{equation}
We then write
\begin{align}\nonumber
R_{2} + R_{3}& =
    -\big<\frac{\partial v}{\partial n}-\frac{1}{\epsilon}(u_0-u) - g, w\big>_{\Gamma} 
 +\sum_{E \in \mathcal{G}_h} \frac{1}{\epsilon+\gamma h_E}  \big< R_{\Gamma}(v),w
\big>_E
\\
&   = \sum_{E \in \mathcal{G}_h} \Big\{  -\big<\frac{\partial v}{\partial n}-\frac{1}{\epsilon}(u_0-u) - g, w\big>_{E}  + \frac{1}{\epsilon+\gamma h_E}  \big< R_{\Gamma}(v),w
\big>_E \Big\}
\\
\nonumber
&=   \sum_{E \subset  \Gamma_{\epsilon}} \Big\{  -\big<\frac{\partial v}{\partial n}-\frac{1}{\epsilon}(u_0-u) - g, w\big>_{E}  + \frac{1}{\epsilon+\gamma h_E}  \big< R_{\Gamma}(v),w
\big>_E \Big\}
\\&\quad +    \sum_{E \subset \Gamma \setminus \Gamma_{\epsilon}} \Big\{  -\big<\frac{\partial v}{\partial n}-\frac{1}{\epsilon}(u_0-u) - g, w\big>_{E}  + \frac{1}{\epsilon+\gamma h_E}  \big< R_{\Gamma}(v),w
\big>_E \Big\}.
\nonumber 
 \end{align}

\medskip
\noindent 4. {\it Estimation of the contribution to $R_{2}+R_{3}$ from the part $\Gamma_\epsilon$.}

 On $E\subset \Gamma_{\epsilon} $ it holds $\epsilon < h_{E}$ and  we   estimate as follows, using Lemma \ref{resest},
 \begin{align}\label{c}
 \nonumber 
    \sum_{E \subset  \Gamma_{\epsilon}} & \frac{1}{\epsilon+\gamma h_E}  \big< R_{\Gamma}(v),w
\big>_E\\
\nonumber
& \leq 
 \sum_{E \subset  \Gamma_{\epsilon}} \frac{(\epsilon +h_{E})^{1/2}}{\epsilon+\gamma h_E}  \Vert R_{\Gamma}(v)\Vert_{0,E}\cdot(\epsilon +h_{E})^{-1/2}\Vert w \Vert_{0,E}
\\&\leq  \sum_{E \subset  \Gamma_{\epsilon}}    
\frac{  \sqrt{2}  h_{E}^{1/2}}{\epsilon+\gamma h_E}  \Vert R_{\Gamma}(v)\Vert_{0,E}\cdot(\epsilon +h_{E})^{-1/2}\Vert w \Vert_{0,E}
\\ 
\nonumber
 &\leq 
\Big(  \sum_{E \subset  \Gamma_{\epsilon}}  
\frac{2h_{E}}{(\epsilon+\gamma h_E)^{2}}  \Vert R_{\Gamma}(v)\Vert_{0,E}^{2}\Big)^{1/2}\Big(
 \sum_{E \subset  \Gamma_{\epsilon}} (\epsilon +h_{E})^{-1}\Vert w \Vert_{0,E}^{2}
\Big)^{1/2}
\\
\nonumber
&\leq
C\big  ( \| u-v \|_h+\osc(f)+\osc(\epsilon,u_{0},g)\big)\Vert w \Vert_{h}.
 \nonumber 
 \end{align}
Next, we have to estimate
\begin{equation}
 -\big<\frac{\partial v}{\partial n}-\frac{1}{\epsilon}(u_0-u) - g, w\big>_{\Gamma_{\epsilon}} .
\end{equation}
We substitute 
\begin{equation}
 \frac{1}{\epsilon}(u_0-u) + g= \frac{\partial u}{\partial n}  ,
\end{equation}
which gives
\begin{equation}
 -\big<\frac{\partial v}{\partial n}-\frac{1}{\epsilon}(u_0-u) -g, w\big>_{\Gamma_{\epsilon}} = 
 \big< \frac{\partial u}{\partial n}-\frac{\partial v}{\partial n}, w \big>_{\Gamma_\epsilon} .
\end{equation}
Now we define the strip
\begin{equation}
 \Omega_\epsilon  = \bigcup_{\substack{K \in \mathcal T_h \\  K \cap \Gamma_\epsilon \neq \emptyset}} K.
 \end{equation}
Following \cite{AK,ccea,MR1972728} we construct a linear finite element extension $\exth w \in V_{h} $ of $w\vert_{\Gamma_{\epsilon}}$ such that
\begin{equation}
 \exth w  \vert_{\Gamma_{\epsilon}}=w\vert_{\Gamma_{\epsilon}} 
\end{equation}
and 
\begin{equation}
\exth w=0 \ \mbox{ in } \  \Omega\setminus \Omega_{\epsilon}.
\end{equation}
In \cite{AK,ccea,MR1972728} the following estimate is derived
\begin{equation}\label{trace}
\Vert \nabla  \exth w \Vert_{0,\Omega_{{\epsilon}}}\leq C\big(\sum_{E\subset \Gamma_{\epsilon}}h_{E}^{-1}\Vert w \Vert_{0,E}^{2}\big)^{1/2}.
\end{equation}
We denote
 \begin{equation}
 \Gamma_\epsilon^+  = \Omega_\epsilon \cap \Gamma.
\end{equation}
and 
split the boundary of $ \Omega_{\epsilon} $ in three parts (cf. Figure \ref{figure})
\begin{equation}
 \partial \Omega_{\epsilon} = \Gamma_{\epsilon}\cup \{\Gamma_{\epsilon}^{+}\setminus \Gamma_{\epsilon}\}\cup\{
  \partial \Omega_{\epsilon}\setminus �\Gamma_{\epsilon}^{+}\} .
\end{equation}

\begin{figure}\label{figure}
\includegraphics[scale=0.6]{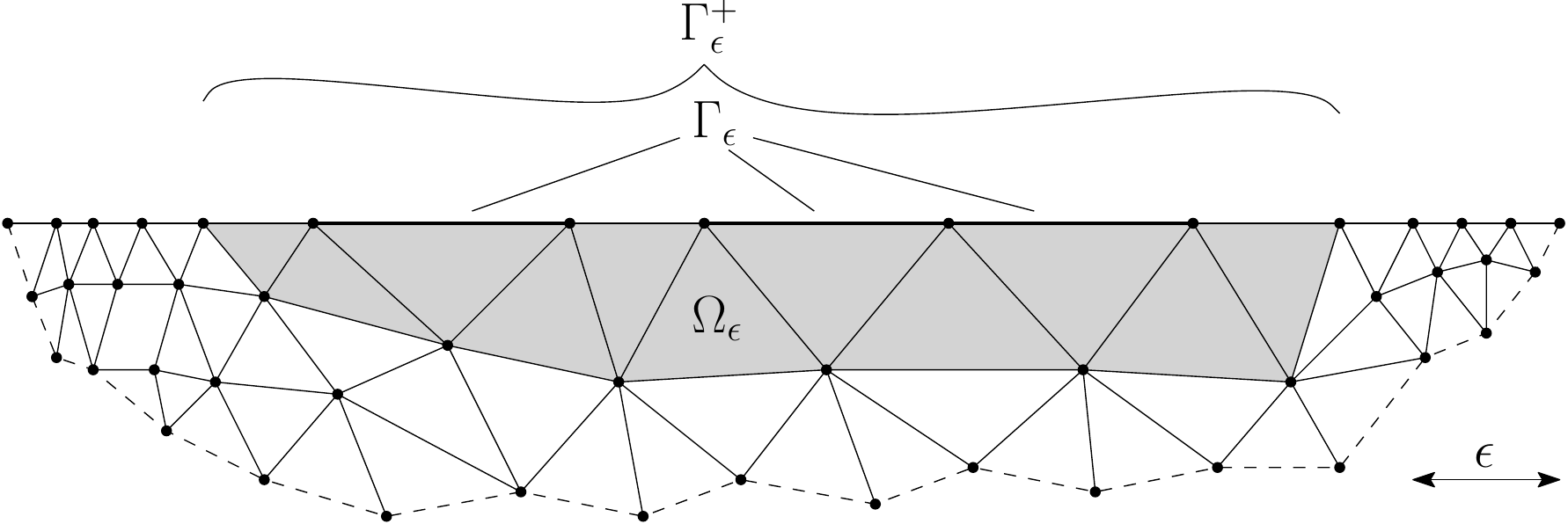}
\caption{The boundary parts   $ \Gamma_{\epsilon} \mbox{ and }  \Gamma_{\epsilon}^{+}$, and the strip $\Omega_{\epsilon} $.}
\end{figure}

Note that  $ \exth w \not = w $ on $ \Gamma_{\epsilon}^{+}\setminus \Gamma_{\epsilon}$.
Since $\exth w\vert_{ \partial \Omega_{\epsilon}\setminus \Gamma_{\epsilon}^{+}}=0$, scaling and the estimate \eqref{trace} show that
\begin{align}\label{z}
  \big(  &\sum_{K \subset \Omega_\epsilon}h_K^{-2} \Vert \exth w \Vert_{0,K}^2 \big)^{1/2} 
 + \big(\sum_{E \subset  \Omega_\epsilon \setminus  \Gamma_\epsilon^+} h_E^{-1} \Vert \exth w \Vert_{0,E}^2 \big)^{1/2} 
  \\& \leq 
C \Vert \nabla \exth w \Vert_{0,\Omega_\epsilon}\leq C\big(\sum_{E\subset \Gamma_{\epsilon}}h_{E}^{-1}\Vert w \Vert_{0,E}^{2}\big)^{1/2},
\nonumber\end{align}
and also
\begin{equation}\label{zz}
  \big(\sum_{E \subset   \Gamma_\epsilon^+\setminus \Gamma_\epsilon} h_E^{-1} \Vert \exth w \Vert_{0,E}^2 \big)^{1/2} 
  \leq 
C \Vert \nabla \exth w \Vert_{0,\Omega_\epsilon} \leq C\big(\sum_{E\subset \Gamma_{\epsilon}}h_{E}^{-1}\Vert w \Vert_{0,E}^{2}\big)^{1/2}.
 \end{equation}
Further, since $\epsilon < h_{E}$, it holds
\begin{equation}\label{zzz}
 \big(\sum_{E\subset \Gamma_{\epsilon}}h_{E}^{-1}\Vert w \Vert_{0,E}^{2}\big)^{1/2}\leq 
\sqrt{2} \big( \sum_{E \subset \Gamma_\epsilon} \frac 1 {h_E + \epsilon} \Vert w \Vert_{0,E}^2 \big)^{1/2}
 \leq \sqrt{2}   \Vert w \Vert_h. 
\end{equation} 
Next, integrating by parts and using \eqref{z}--\eqref{zzz} we estimate as follows
\begin{align}\nonumber
 &\hskip -15 pt \big< \frac{\partial u}{\partial n}-\frac{\partial v}{\partial n}, \exth w \big>_{\Gamma_\epsilon^+} \\
 &= \sum_{K \subset \Omega_\epsilon} \left[ - \big( f + \Delta v, \exth w \big)_K + \big( \nabla(u-v), \nabla \exth w \big)_K \right] 
 \\
 \nonumber
 & \quad + \sum_{E \subset \Omega_\epsilon \setminus  \Gamma_\epsilon^+}  \big< \blj \frac{\partial v}{\partial n} \brj, \exth w \big>_E \\
 \nonumber
 &\leq C \big( \sum_{K \subset \Omega_\epsilon} h_K^2 \Vert f + \Delta v \Vert_{0,K}^2 \big)^{1/2} 
           \big( \sum_{K \subset \Omega_\epsilon} h_K^{-2} \Vert \exth w \Vert_{0,K}^2 \big)^{1/2} 
 \\
 \nonumber
 &\quad +  \Vert \nabla (u-v) \Vert_{0,\Omega_\epsilon} \Vert \nabla \exth w \Vert_{0,\Omega_\epsilon} 
 \\
 \nonumber
 &\quad + \big( \sum_{E \subset \Omega_\epsilon \setminus  \Gamma_\epsilon^+} h_E \Vert \blj \frac{\partial v}{\partial n} \brj \Vert_{0,E}^2 \big)^{1/2} 
    \big( \sum_{E \subset \Omega_\epsilon \setminus  \Gamma_\epsilon^+}  h_E^{-1} \Vert \exth w \Vert_{0,E}^2 \big)^{1/2} 
 \\
 \nonumber
 &\leq C\Big\{ \big(\sum_{K \subset \Omega_\epsilon} h_K^2 \Vert f + \Delta v \Vert_{0,K}^2 \big)^{1/2} 
  + \Vert \nabla (u-v) \Vert_{0,\Omega_\epsilon} \\
 \nonumber
 &\quad + \big( \sum_{E \subset \Omega_\epsilon \setminus  \Gamma_\epsilon^+} h_E \Vert \blj \frac{\partial v}{\partial n} \brj \Vert_{0,E}^2 \big)^{1/2} \Big\} \Vert w \Vert_{h}.
\end{align}
From a posteriori error analysis \cite{braess,verfurth} we know that
\begin{align}\label{zzzz}
\big(  &\sum_{E \subset \Omega_\epsilon \setminus  \Gamma_\epsilon^+}  h_E \Vert \blj \frac{\partial v}{\partial n} \brj \Vert_{0,E}^2 \big)^{1/2}
\\
& \leq \notag
C \big( \sum_{K \subset \Omega_\epsilon} h_K^2 \Vert f + \Delta v \Vert_{0,K}^2 \big)^{1/2} +  \Vert \nabla (u-v) \Vert_{0,\Omega_\epsilon} 
\end{align}
and
\begin{equation}
 \big( \sum_{K \subset \Omega_\epsilon} h_K^2 \Vert f + \Delta v \Vert_{0,K}^2 \big)^{1/2} \leq C\big( \Vert \nabla (u-v) \Vert_{0,\Omega } +\osc(f)\big).
\end{equation}
Hence we have
\begin{align}
 \big< \frac{\partial u}{\partial n}-\frac{\partial v}{\partial n},E_{h} w \big>_{\Gamma_\epsilon^+} \leq C \Big(\Vert u-v \Vert_h + \osc(f) \Big) \Vert w \Vert_h.
\end{align}
Since $\exth w=w$ on $\Gamma_{\epsilon}$,  we get
\begin{align}
 \big< \frac{\partial u}{\partial n}-\frac{\partial v}{\partial n}, w \big>_{\Gamma_\epsilon} 
 \leq  C \Big(\Vert u-v \Vert_h + \osc(f) \Big) \Vert w \Vert_h
 - \big< \frac{\partial u}{\partial n}-\frac{\partial v}{\partial n}, \exth w \big>_{\Gamma_\epsilon^+\setminus \Gamma_\epsilon}.
\end{align}
For $E \subset \Gamma_\epsilon^+\setminus \Gamma_\epsilon$ it holds that $h_E \leq \epsilon \leq C h_E$ 
with a constant that only depends on the regularity constant $\kappa$. Thus we can estimate
\begin{align}\notag 
 &\hskip -15 pt - \big< \frac{\partial u}{\partial n}-\frac{\partial v}{\partial n}, \exth w \big>_{\Gamma_\epsilon^+\setminus \Gamma_\epsilon}
  \\
  \notag
 &= \sum_{E \subset \Gamma_\epsilon^+\setminus \Gamma_\epsilon} - \frac 1 \epsilon \big< R_\Gamma(v), \exth w \big>_E - \frac 1 \epsilon \big< u-v, \exth w \big>_E
  \\
 &\leq C \Big(\sum_{E \subset \Gamma_\epsilon^+\setminus \Gamma_\epsilon} 
          \frac 1 {\epsilon + h_E} \Vert R_\Gamma(v) \Vert_{0,E} \Vert \exth w \Vert_{0,E} + \frac 1 {\epsilon + h_E} \Vert u - v \Vert_{0,E} \Vert \exth w \Vert_{0,E}\Big) 
          \\
          \notag
 &\leq C \big(  \sum_{E \subset \Gamma_\epsilon^+\setminus \Gamma_\epsilon} \frac{h_E}{(h_E + \epsilon)^2} \Vert R_\Gamma(v) \Vert_{0,E}^2 
                   + \frac 1 {h_E + \epsilon} \Vert u-v \Vert_{0,E}^2 \big)^{1/2} 
                   \\
                   \notag  &\qquad \times 
 \big(  \sum_{E \subset \Gamma_\epsilon^+\setminus \Gamma_\epsilon} \frac 1 {h_E + \epsilon} \Vert \exth w \Vert_{0,E}^2 \big)^{1/2}.
\end{align}
Thus we have
\begin{align}
 \big< \frac{\partial u}{\partial n}-\frac{\partial v}{\partial n}, w \big>_{\Gamma_\epsilon} \leq C \big(\| u-v \|_h+\osc(f)+\osc(\epsilon,u_{0},g)\big) \Vert w \Vert_h,
\end{align}
which  together with \eqref{c}
gives
\begin{align}\label{cc}\notag
 \sum_{E \subset  \Gamma_{\epsilon}} \Big\{  -&\big<\frac{\partial v}{\partial n}-\frac{1}{\epsilon}(u_0-u) - g, w\big>_{E}  
         + \frac{1}{\epsilon+\gamma h_E}  \big< R_{\Gamma}(v),w
 \big>_E \Big\}
 \\
 &\leq C \big(\| u-v \|_h+\osc(f)+\osc(\epsilon,u_{0},g)\big) \Vert w \Vert_h.
\end{align}

\noindent 5. {\it Estimation of the contribution to $R_{2}+R_{3}$ from the part  $\Gamma\setminus \Gamma_\epsilon$.}
\smallskip
 
It now holds $\epsilon \geq h_{E}$. First write 
\begin{align}
 \frac{\partial v}{\partial n}-\frac{1}{\epsilon}(u_0-u) - g &= \frac{\partial v}{\partial n}-\frac{1}{\epsilon}(u_0-v) - g +\frac{1}{\epsilon}(u-v)
 \\&= \frac{1}{\epsilon}R_{\Gamma}(v)+\frac{1}{\epsilon}(u-v).
 \nonumber
\end{align}
Hence, on $E$ it holds
\begin{equation}\begin{aligned}\label{o}
 &\hskip -15 pt -\big<\frac{\partial v}{\partial n}-\frac{1}{\epsilon}(u_0-u) - g, w\big>_{E}  + \frac{1}{\epsilon+\gamma h_E}  \big< R_{\Gamma}(v),w\big>_E 
 \\
 &= \big( \frac{1}{\epsilon+\gamma h_E} - \frac{1}{\epsilon}  \big) \big< R_{\Gamma}(v),w \big>_E - \frac{1}{\epsilon} \big<u-v ,w\big>_{E} 
 \\
 &=
 -\frac{\gamma h_{E}}{(\epsilon+\gamma h_E)\epsilon}   \big< R_{\Gamma}(v),w \big>_E -\frac{1}{\epsilon} \big<u-v, w\big>_{E}
 \\
 &\leq  \frac{\gamma h_{E}}{(\epsilon+\gamma h_E)\epsilon}   \Vert R_{\Gamma}(v) \Vert_{0,E}\Vert w \Vert_{0,E} 
      +\frac{1}{\epsilon}\Vert u-v  \Vert_{0,E}\Vert w\Vert_{0,E}.
 \end{aligned}
\end{equation}
Since $\epsilon+h_{E}\leq 2 \epsilon$, it holds
\begin{equation}
 \frac{1}{\epsilon}\Vert u-v  \Vert_{0,E}\Vert w \Vert_{0,E} \leq \frac{2}{\epsilon+h_{E}}\Vert u-v  \Vert_{0,E}\Vert w\Vert_{0,E}.
\end{equation}
Since $h_{E}/\epsilon\leq1$ we estimate as follows
\begin{equation}\label{oo}
 \begin{aligned}  
 &\frac{\gamma h_{E}}{(\epsilon+\gamma h_E)\epsilon}   \Vert R_{\Gamma}(v) \Vert_{0,E}\Vert w\Vert_{0,E} 
 \\
 &= \frac{\gamma h_{E} (\epsilon+h_{E})^{1/2}}{(\epsilon+\gamma h_E)\epsilon}   \Vert R_{\Gamma}(v)\Vert_{0,E}\cdot  (\epsilon+h_{E})^{-1/2} \Vert w \Vert_{0,E}
 \\
 &= \gamma \frac{h_{E}^{1/2}}{\epsilon^{1/2}} \frac{h_{E}^{1/2}}{(\epsilon+\gamma h_E) }\frac{(\epsilon+h_{E})^{1/2}}{ \epsilon^{1/2}} \Vert R_{\Gamma}(v)\Vert_{0,E}
    \cdot  (\epsilon+h_{E})^{-1/2} \Vert w \Vert_{0,E}
 \\
 &= \gamma \frac{h_{E}^{1/2}}{\epsilon^{1/2}} \frac{h_{E}^{1/2}}{(\epsilon+\gamma h_E) } (1+\frac{h_{E}}{\epsilon})^{1/2} \Vert R_{\Gamma}(v)\Vert_{0,E}
    \cdot  (\epsilon+h_{E})^{-1/2} \Vert w \Vert_{0,E}
 \\
 &\leq \sqrt{2} \gamma  \frac{h_{E}^{1/2}}{(\epsilon+\gamma h_E) }   \Vert R_{\Gamma}(v)\Vert_{0,E}\cdot  (\epsilon+h_{E})^{-1/2} \Vert w \Vert_{0,E}.
 \end{aligned}
\end{equation}
Combining \eqref{o}--\eqref{oo}  yields 
\begin{align}\label{ooo}
 &\hskip -15 pt \sum_{E\subset \Gamma\setminus \Gamma_{\epsilon}}\Big(-\big<\frac{\partial v}{\partial n}-\frac{1}{\epsilon}(u_0-u) - g, w\big>_{E}  + \frac{1}{\epsilon+\gamma h_E}  \big< R_{\Gamma}(v),w\big>_E \Big)
 \\
 &\leq C \big(\| u-v \|_h+\osc(f)+\osc(\epsilon,u_{0},g)\big) \Vert w \Vert_h.
 \notag
\end{align}
\smallskip
\noindent 6. {\it Collecting the estimates.}
\smallskip

Adding \eqref{cc}  and \eqref{ooo} gives
\begin{equation}
R_{2}+R_{3}\leq C   \big(\| u-v \|_h+\osc(f)+\osc(\epsilon,u_{0},g)\big) \Vert w \Vert_h.
\end{equation}
The assertion then follows from this and \eqref{split}, \eqref{r1}, and \eqref{r4}.

\end{proof} 

\bibliography{LJS}
\bibliographystyle{siam}
\end{document}